\theoremstyle{definition}
\newtheorem{thm}{Theorem}[section]
\newtheorem{prop}[thm]{Proposition}
\newtheorem{defn}{Definition}[section]
\newcommand{\gl}{\ensuremath{\mathscr{L}}} 
\newcommand{\gr}{\ensuremath{\mathscr{R}}}
\newcommand{\gh}{\ensuremath{\mathscr{H}}}
\newcommand{\gd}{\ensuremath{\mathrel{\mathscr{D}}}}
\newcommand{\tls}{\ensuremath{T\mathbb{L}(S)}}
\newcommand{\trs}{\ensuremath{T\mathbb{R}(S)}}
\newcommand{\los}{\ensuremath{\mathbb{L}(S)}}
\newcommand{\ros}{\ensuremath{\mathbb{R}(S)}}
\newcommand{\nls}{\ensuremath{N^*\mathbb{L}(S)}}
\newcommand{\nrs}{\ensuremath{N^*\mathbb{R}(S)}}
\begin{document}

\author{P.A. Azeef Muhammed}
\address{Centre for Research in Mathematics and Data Science, Western Sydney University, Penrith, NSW 2751, Australia.}

\author{C.S. Preenu}
\address{Department of Mathematics, University College, Thiruvananthapuram, Kerala 695034, India.}

\title{Cross-connections in Clifford  Semigroups}

\maketitle

\begin{abstract}
An inverse Clifford semigroup (often referred to as just a Clifford semigroup) is a semilattice of groups. It is an inverse semigroup and in fact, one of the earliest studied classes of semigroups~\cite{clifinv}. In this short note, we discuss various structural aspects of a Clifford semigroup from a cross-connection perspective. In particular, given a Clifford semigroup $S$, we show that the semigroup $\tls$ of normal cones is isomorphic to the original semigroup $S$, even when $S$ is not a monoid. Hence, we see that cross-connection description degenerates in Clifford semigroups. Further, we specialise the discussion to provide the description of the cross-connection structure in an arbitrary semilattice, also.  
\end{abstract}

\section{Introduction}
Grillet introduced  cross-connections as a pair of functions to describe the interrelationship between the posets of principal left and right  ideals of a regular semigroup. This construction involved building two intermediary semigroups and further identifying a fundamental image of the semigroup as a subdirect product, using the cross-connection functions. But isomorphic posets give rise to isomorphic cross-connections. 
So, Nambooripad replaced posets with certain small categories to overcome this limitation. Hence, using the categorical theory of cross-connections, Nambooripad constructed arbitrary semigroups from their ideal structure.  

Starting from a regular semigroup $S$, Nambooripad identified two small categories: $\los$ and $\ros$ which abstract the principal left and right ideal structures of the semigroup, respectively. He showed that these categories are interconnected using a pair of functors. It can be seen that the object maps of these functors coincides with Grillet's cross-connection functions and so Nambooripad called his functors also cross-connections. Further, Nambooripad showed that this correspondence can be extended to an explicit category equivalence between the category of regular semigroups and the category of cross-connections.  Hence the ideal structure of a regular semigroup can be completely captured using these `cross-connected' categories.

Being a rather technical construction, it is instructive to work out the simplifications that arise in various special classes of semigroups. There has been several works in this direction \cite{tx,tlx,rajan2022cross,css,locinverse,var}
and in this short article, we propose to outline how the construction simplifies in a couple of very natural classes of regular semigroups: namely Clifford semigroups and semilattices. As the reader may see, this exercise also provides some useful illustrations to several cross-connection related subtleties.

In fact, Clifford semigroups are one of the first classes of inverse semigroups whose structure was studied. It was originally defined \cite{clifinv} as a union of groups in which idempotents commute. It may be noted that some authors refer to general union of groups also as Clifford semigroups but we shall follow \cite{howie1995fundamentals} and refer to a semilattice of groups as a Clifford semigroup. The following characterizations of Clifford semigroups will be useful in the sequel.
\begin{thm}\cite[Theorem 4.2.1]{howie1995fundamentals}\cite[Theorem 1.3.11]{higgins1992techniques}
Let $S$ be a semigroup. Then the following statements are equivalent.
\begin{enumerate}
    \item $S$ is a Clifford semigroup;
    \item $S$ is a semilattice of groups;
    \item $S$ is a strong semilattice of groups;
    \item $S$ is regular and the idempotents of $S$ are central;
    \item Every $\gd$ class of $S$ has a unique idempotent. 
\end{enumerate}
\end{thm}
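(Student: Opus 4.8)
The plan is to prove the five conditions equivalent through a small web of implications rather than one long cycle, funnelling the real work into a single lemma. Since this note adopts the convention that a Clifford semigroup \emph{is} a semilattice of groups, the equivalence $(1)\Leftrightarrow(2)$ is definitional, so I would concentrate on tying $(2),(3),(4),(5)$ together. Write $E=E(S)$ for the set of idempotents. The organising observation is that both $(2)$ and $(5)$ reduce quickly to the same intermediate statement, namely that $S$ is inverse and satisfies $aa^{-1}=a^{-1}a$ for every $a$ (equivalently, each element is \gh-related to an idempotent and the idempotents commute). From there a single lemma upgrades this to centrality of the idempotents, i.e.\ to $(4)$, after which the remaining passages are short.

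First I would record the two reductions. For $(2)$: a semilattice of groups is a disjoint union of groups, hence completely regular; its idempotents are exactly the group identities $e_\alpha$, these commute, and $aa^{-1}=a^{-1}a=e_\alpha$ for $a\in G_\alpha$, so $S$ is inverse and meets the intermediate condition. For $(5)$: since a \gd-class contains an idempotent precisely when all its elements are regular, the hypothesis that every \gd-class has an idempotent makes $S$ regular; uniqueness of that idempotent then forces at most one idempotent in each \gl- and each \gr-class, and a regular semigroup with this property is inverse; finally $aa^{-1}\gd a^{-1}a$ together with uniqueness per \gd-class gives $aa^{-1}=a^{-1}a$.

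The heart of the argument — and the step I expect to be the main obstacle — is the lemma that an inverse semigroup with $aa^{-1}=a^{-1}a$ for all $a$ has central idempotents. The idea is as follows. Fix $e\in E$ and $a\in S$, put $f=aa^{-1}=a^{-1}a$ and $g=ef=fe\in E$; using $fa=a=af$ one gets $ea=ga$ and $ae=ag$, so it suffices to prove $ga=ag$. The trick is to apply the hypothesis $xx^{-1}=x^{-1}x$ to the element $x=ga$: a direct computation gives $(ga)(ga)^{-1}=g$ and $(ga)^{-1}(ga)=a^{-1}ga$, whence $a^{-1}ga=g$, and left-multiplying by $a$ (and using $aa^{-1}=f$, $fg=g$) yields $ga=ag$. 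Thus the idempotents are central and $S$ is regular, giving $(4)$; spotting the right element to which to apply the hypothesis is the one genuinely non-routine move, the rest being formal manipulation.

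It then remains to close the loop cheaply. For $(4)\Rightarrow(3)$, with central idempotents the linking maps write themselves: for $\alpha\ge\beta$ in the semilattice $E$ set $\phi_{\alpha,\beta}(x)=xe_\beta$, and centrality makes each $\phi_{\alpha,\beta}$ a homomorphism with $\phi_{\alpha,\alpha}=\mathrm{id}$, makes the composites transitive ($\phi_{\beta,\gamma}\circ\phi_{\alpha,\beta}=\phi_{\alpha,\gamma}$ via $e_\beta e_\gamma=e_\gamma$), and reconstructs the product ($\phi_{\alpha,\alpha\beta}(a)\,\phi_{\beta,\alpha\beta}(b)=ab\,e_{\alpha\beta}=ab$ for $a\in G_\alpha$, $b\in G_\beta$). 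A strong semilattice of groups is in particular a semilattice of groups, so $(3)\Rightarrow(2)$ is immediate. Finally $(4)\Rightarrow(5)$: centrality forces any two \gl-related (or \gr-related) idempotents to coincide, hence at most one idempotent per \gd-class, while regularity supplies at least one. Together with the definitional $(1)\Leftrightarrow(2)$, this establishes the theorem.
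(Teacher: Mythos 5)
The paper offers no proof of this theorem at all: it is quoted verbatim from Howie and Higgins, so there is no internal argument to measure you against and I can only assess your proposal on its own terms. Your overall architecture is sound, and the step you rightly single out as the heart of the matter checks out completely: for an inverse semigroup with $aa^{-1}=a^{-1}a$ for all $a$, putting $f=aa^{-1}=a^{-1}a$ and $g=ef=fe$, the computations $(ga)(ga)^{-1}=g(aa^{-1})g=gfg=g$ and $(ga)^{-1}(ga)=a^{-1}ga$, followed by left multiplication by $a$ and $fg=g$, do yield $ag=ga$, hence $ae=ag=ga=ea$. The reduction of $(5)$ to this intermediate condition (regularity from the existence of an idempotent in each $\gd$-class, inverseness from uniqueness of idempotents in $\gl$- and $\gr$-classes, and $aa^{-1}=a^{-1}a$ from $aa^{-1}\gd a^{-1}a$) is also correct.

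Two of your remaining arrows, however, are not justified as written. In $(4)\Rightarrow(3)$ you define linking maps $\phi_{\alpha,\beta}\colon G_\alpha\to G_\beta$, but nothing in your argument establishes that a regular semigroup with central idempotents \emph{is} a disjoint union of groups $G_\alpha=H_{e_\alpha}$ indexed by the semilattice $E(S)$; that decomposition is essentially the implication $(4)\Rightarrow(2)$ and needs its own step: for an inverse $a'$ of $a$ with $e=aa'$ and $f=a'a$, centrality turns $ea=a$ into $ae=a$ (whence $fe=a'ae=a'a=f$) and $af=a$ into $fa=a$ (whence $fe=faa'=aa'=e$), so $aa'=a'a$, $a$ lies in the group $H_{aa'}$, and one then checks $H_eH_f\subseteq H_{ef}$. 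Without this, your $\phi_{\alpha,\beta}$ has no domain. Second, in $(4)\Rightarrow(5)$ the inference ``at most one idempotent in each $\gl$- and each $\gr$-class, hence at most one per $\gd$-class'' is a non sequitur: one idempotent per $\gl$- and $\gr$-class is exactly the inverse-semigroup condition, and an inverse semigroup can have many idempotents in one $\gd$-class (the rank-$k$ $\gd$-class of the symmetric inverse monoid on $n$ points contains $\binom{n}{k}$ of them). You must invoke centrality again: if $e\gd f$ are idempotents, choose $a$ with $aa'=e$ and $a'a=f$ and run the two-line computation above to get $e=f$. Finally, your claim that a semilattice of groups is inverse deserves a line (e.g.\ uniqueness of inverses: if $b\in G_\beta$ is an inverse of $a\in G_\alpha$ then $a=aba\in G_{\alpha\beta}$ and $b=bab\in G_{\alpha\beta}$ force $\alpha=\beta$, after which $b=a^{-1}$ in the group $G_\alpha$), since commutativity of the idempotents $e_\alpha$ is not immediate from the definition. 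All of these are repairable with standard arguments, but as written $(4)\Rightarrow(3)$ and $(4)\Rightarrow(5)$ contain genuine gaps.
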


In Nambooripad's cross-connection description, starting from a regular semigroup $S$, two small categories of principal left and right ideals (denoted by $\los$ and $\ros$, respectively in the sequel) are defined. Then their inter-relationship is abstracted as a pair of functors called cross-connections. Conversely, given an abstractly defined pair of cross-connected categories (with some special properties), one can construct a regular semigroup. This correspondence between regular semigroups and cross-connections is proved to be a category equivalence. 

The construction of the regular semigroup from the category happens in several layers and the crucial object here is the intermediary regular semigroup $\tls$ of normal cones from the given category $\los$. We shall see that when the semigroup $S$ is Clifford, then the semigroup $\tls$ is isomorphic to $S$. It is known that this is not true in general  \cite{locinverse} even for an inverse semigroup $S$. This in turn, highly degenerates the cross-connection structure in Clifford semigroups. This is discussed in the next section. In the last section, we specialise our discussion to an arbitrary semilattice and describe the cross-connections therein.

As mentioned above, since the article can be seen as a part of a continuing project of studying the various classes of regular semigroups within the cross-connection framework, we refer the reader to \cite{locinverse,rajan2022cross,var} for the preliminary notions and formal definitions in Nambooripad's cross-connection theory. We also refer the reader to \cite{nambooripad1994theory} for the original treatise on cross-connections.

\section{Normal categories and cross-connections in Clifford semigroups}

Recall from \cite{nambooripad1994theory} that given a regular semigroup $S$, the normal category $\los$ of principal left ideals are defined as follows. The object set 
$$v\los:=~~\{ Se : e \in E(S) \}$$  and the morphisms in $\los$ are partial right translations. In fact, the set of all morphisms between two objects $Se$ and $Sf$ may be characterised as the set $ \{ \rho(e,u,f) : u\in eSf \}$, where the map $\rho(e,u,f)$
sends $x\in Se$ to $xu\in Sf$. 

First, we proceed to discuss some special properties of the category $\los$ when $S$ is a Clifford semigroup. This will lead us to the characterisation of the semigroup  $\tls$ of all normal cones in $S$.

\begin{prop}
Let $S$ be a Clifford semigroup. Two objects in $\los$ are isomorphic  if and only if they are identical. 
\end{prop}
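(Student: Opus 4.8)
The forward implication is immediate: identical objects admit the identity morphism $\rho(e,e,e)$ as an isomorphism. The plan for the converse is to show that an isomorphism $Se\cong Sf$ in $\los$ forces $e\gd f$, and then to invoke the characterisation recalled above, in particular condition~(5), that in a Clifford semigroup every $\gd$-class contains a unique idempotent.

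First I would use the description of morphisms from the excerpt: any isomorphism $Se\to Sf$ has the form $\rho(e,u,f)$ for some $u\in eSf$, with two-sided inverse $\rho(f,v,e)$ for some $v\in fSe$. Composing partial right translations multiplies the translating elements, so $\rho(e,u,f)\rho(f,v,e)=\rho(e,uv,e)$ and $\rho(f,v,e)\rho(e,u,f)=\rho(f,vu,f)$, where one checks that $uv\in eSe$ and $vu\in fSf$. Since two translations $\rho(e,p,e)$ and $\rho(e,q,e)$ on $Se$ agree precisely when $ep=eq$ (evaluate at $x=e$), and the identity on $Se$ is $\rho(e,e,e)$, the two isomorphism conditions reduce to the pair of equations $uv=e$ and $vu=f$.

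From these equations the relevant Green's relations can be read off directly. Since $u=eu=uf$ while $e=uv\in uS$ and $f=vu\in Su$, one obtains $eS=uS$ and $Su=Sf$, that is $e\mathrel{\gr}u$ and $u\mathrel{\gl}f$; hence $e\gd f$. Applying condition~(5), a Clifford semigroup has a unique idempotent in each $\gd$-class, so $e\gd f$ with $e,f$ idempotent yields $e=f$, and therefore $Se=Sf$ as claimed.

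I expect the only delicate point to be the bookkeeping in the second step: one must verify that equality of morphisms in $\los$ is detected by evaluating at the generating idempotent, and that $uv$ genuinely lies in $eSe$ (so that comparison with $\rho(e,e,e)$ is legitimate) and $vu$ in $fSf$. Everything thereafter is a routine transcription of the normal-category composition law followed by the appeal to the characterisation theorem; notably, the argument deducing $e\gd f$ uses no special feature of Clifford semigroups, and it is only the collapse of $\gd$ onto idempotents that delivers the identification.
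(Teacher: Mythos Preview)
Your argument is correct. The paper's proof follows the same outline---show that an isomorphism $Se\cong Sf$ forces $e\gd f$, then use a Clifford-specific collapse---but executes each step differently. For the first step the paper simply cites \cite[Proposition III.13(c)]{nambooripad1994theory}, whereas you unpack the composition law for partial right translations and derive $uv=e$, $vu=f$ by hand; your version is more elementary and self-contained, while the paper's is quicker by appeal to the reference. For the second step the paper uses the coincidence $\gd=\gl$ in a Clifford semigroup to pass directly from $e\gd f$ to $Se=Sf$, whereas you invoke condition~(5) (unique idempotent per $\gd$-class) to obtain the slightly stronger conclusion $e=f$. Both finishes are legitimate readings of the characterisation theorem; neither buys anything extra, since in a Clifford semigroup $e\gl f$ with $e,f\in E(S)$ already forces $e=f$.
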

\begin{proof}
  Clearly, identical objects are always isomorphic. Conversely, suppose $Se$ and $Sf$ are two isomorphic objects in $\los$. Then by \cite[Proposition III.13(c)]{nambooripad1994theory}, we have  $e\gd f$ . Recall that in a Clifford semigroup, the Green's relations $\gl$, $\gr$ and $\gd$ are identical. Therefore $e\gl f$ and hence $Se=Sf$.   
\end{proof}

Given the normal category $\los$ of principal left ideals of a regular semigroup $S$, it is known that two morphisms are equal, i.e. $\rho(e,u,f)=\rho(g,v,h)$ if and only if $e\mathrel{\mathscr{L}}g$, $f\mathrel{\mathscr{L}}h$, and $v=gu$. Also, given two morphisms $\rho(e,u,f)$ and $\rho(g,v,h)$, they are composable if and only $Sf=Sg$ so that $\rho(e,u,f)\rho(g,v,h) = \rho(e,uv,h)$. Now, we see that the equality of morphisms simplify when $S$ is a Clifford semigroup.

\begin{prop}\label{proeq}
Let $S$ be a Clifford semigroup, then  $\rho(e,u,f)=\rho(g,v,h)$  in the category $\los$ if and only if $e=g$, $u=v$ and $f=h$.
\end{prop}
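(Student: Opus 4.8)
The plan is to start from the general equality criterion for morphisms in $\los$ that was just recalled: $\rho(e,u,f)=\rho(g,v,h)$ holds if and only if $e\mathrel{\mathscr{L}}g$, $f\mathrel{\mathscr{L}}h$, and $v=gu$. Since we are given the equality of the two morphisms, all three of these conditions hold, and the task is to upgrade them to the sharp conclusion $e=g$, $u=v$, $f=h$. The key leverage is the same observation used in the previous proposition: in a Clifford semigroup the Green's relations $\mathscr{L}$, $\mathscr{R}$ and $\mathscr{D}$ coincide, and moreover each $\mathscr{D}$-class contains a unique idempotent (item (5) of the cited Theorem).

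First I would treat the idempotents. From $e\mathrel{\mathscr{L}}g$ we get $e\mathrel{\mathscr{D}}g$; since $e$ and $g$ are both idempotents lying in the same $\mathscr{D}$-class, uniqueness of the idempotent in a $\mathscr{D}$-class forces $e=g$. The identical argument applied to $f\mathrel{\mathscr{L}}h$ yields $f=h$. This disposes of two of the three desired equalities essentially for free, and it is really the characterisation of Clifford semigroups that is doing the work here rather than any computation.

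It then remains to deduce $u=v$. With $g=e$ already established, the third condition $v=gu$ becomes $v=eu$. Now I would use that $u\in eSf$, so $u$ lies in the local submonoid sense with $eu=u$: writing $u=ew$ for some $w$ and using that $e$ is idempotent gives $eu=e(ew)=ew=u$. Hence $v=eu=u$, which is the last equality. I expect this final step to be the only place where one must be a little careful, since it uses the specific form $u\in eSf$ of the elements indexing the morphisms; but it is genuinely routine once the idempotent identifications are in place.

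In summary, the whole proof is a short deduction: invoke the recalled equality criterion, then apply uniqueness of idempotents in $\mathscr{D}$-classes (via $\mathscr{L}=\mathscr{D}$) twice to get $e=g$ and $f=h$, and finally read off $u=v$ from $v=gu=eu=u$. The only conceptual ingredient beyond the general theory is the structural fact about Clifford semigroups already recorded in the theorem of the introduction, so I anticipate no real obstacle.
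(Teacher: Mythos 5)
Your proposal is correct and follows essentially the same route as the paper: invoke the general equality criterion for morphisms in $\los$, use the coincidence of Green's relations in a Clifford semigroup together with uniqueness of idempotents in the relevant class to get $e=g$ and $f=h$, and then conclude $v=gu=eu=u$ from $u\in eSf$. The only cosmetic difference is that you pass through $\mathscr{L}=\mathscr{D}$ and the unique idempotent in a $\mathscr{D}$-class, whereas the paper uses $\mathscr{L}=\mathscr{H}$ and the fact that an $\mathscr{H}$-class contains at most one idempotent; both are immediate from the cited characterisations of Clifford semigroups.
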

\begin{proof}
Suppose that two morphisms $\rho(e,u,f)=\rho(g,v,h)$ are equal. Recall that by \cite[Lemma II.12]{nambooripad1994theory}, this implies that $Se=Sg$, $Sf=Sh$ and $v=gu$ . That is, the elements $e$ and $g$ are two $\gl$ related idempotents. But since $\gl$ and $\gh$ are identical in a Clifford semigroup $S$ and since a $\gh$-class can contain at most one idempotent, we have $e=g$. Similarly we get $f=h$. So, the sets $eSf=gSh$ are equal and also $v=gu=eu=u$. 
\end{proof}

Now, we proceed to characterise the building blocks of the cross-connection construction, namely the normal cones in the category $\los$. An `order-respecting' collection of morphisms in a normal category is defined as a normal cone. 

\begin{defn}
\label{def:normalcone}
Let $\los$ be the normal category of principal left ideals in a regular semigroup $S$ and $Sd\in v\los$. A \emph{ normal cone with apex $Sd$} is a function $\gamma\colon v\los\to \los$ such that:
	\begin{enumerate}
	    \item for each $Se\in v\los$, one has $\gamma(Se)\in\los(Se,Sd)$;
	    \item $\iota(Sf,Sg) \gamma(Sg)=\gamma(Sf)$ whenever $Sf\subseteq Sg$;
	    \item $\gamma(Sm)$ is an isomorphism for some $Sm\in v\los$.
	\end{enumerate}
\end{defn}

Now, for each $a\in S$, we can define a function $\rho^a\colon v\mathbb{L}(S)\to \mathbb{L}(S)$ as follows:
\begin{equation}\label{eqnprinc}
\rho^a(Se):= \rho(e,ea,f) \text{ where } f\mathrel{\mathscr{L}}a.
\end{equation}
It is easy to verify that the map $\rho^a$ is a well-defined normal cone with apex $Sf$ in the sense of Definition~\ref{def:normalcone}, see~\cite[Lemma III.15]{nambooripad1994theory}.

In the sequel, the normal cone $\rho^a$ is called the \emph{principal cone} determined by the element $a$. In particular, observe that, for an idempotent $e\in E(S)$, we have a principal cone  $\rho^e$ such that $\rho^e(Se)= \rho(e,e,e)=1_{Se}$. This leads us to the most crucial proposition of this article.

\begin{prop}\label{pric:cone}
In a Clifford semigroup $S$, every normal cone in $\los$ is a principal cone.  
\end{prop}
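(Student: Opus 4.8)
The plan is to take an arbitrary normal cone $\gamma$ with apex $Sd$ and exhibit a single element $a\in S$ witnessing $\gamma=\rho^a$. Since $\gamma(Se)\in\los(Se,Sd)$ for every idempotent $e$, I would write $\gamma(Se)=\rho(e,u_e,d)$ with $u_e\in eSd$ uniquely determined (uniqueness being exactly Proposition~\ref{proeq}), and take as candidate $a:=u_d$, the translation datum of $\gamma$ at the apex. First I would pin down the apex: condition~(3) of Definition~\ref{def:normalcone} supplies an isomorphism $\gamma(Sm)$, but an isomorphism in $\los(Sm,Sd)$ forces $Sm\cong Sd$, hence $Sm=Sd$ since isomorphic objects in $\los$ are identical; so in fact $\gamma(Sd)=\rho(d,a,d)$ is an isomorphism. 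Inverting it and applying Proposition~\ref{proeq} shows $a$ is a unit of the group $\gh$-class $G_d$, whence $a\gl d$ and the apex $Sf$ of $\rho^a$ (where $f\gl a$) coincides with $Sd$; in particular $f=d$.

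The crux is the identity $u_e=ea$ for every idempotent $e$, since by Proposition~\ref{proeq} this is precisely what makes $\gamma(Se)=\rho(e,u_e,d)$ equal to $\rho^a(Se)=\rho(e,ea,d)$. To establish it I would introduce the meet $g:=ed$, an idempotent lying below both $e$ and $d$, so that $Sg\subseteq Se$ and $Sg\subseteq Sd$. Applying the cone compatibility condition~(2) along the inclusion $\iota(Sg,Se)=\rho(g,g,e)$ and composing via $\rho(e,u,f)\rho(f,v,h)=\rho(e,uv,h)$ yields $u_g=g\,u_e$; applying it along $\iota(Sg,Sd)=\rho(g,g,d)$ yields $u_g=g\,a$. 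The goal is then to show these two descriptions of $u_g$ together force $u_e=ea$.

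The step I expect to be the main obstacle, and the one that genuinely invokes the Clifford hypothesis, is collapsing $g\,u_e$ to $u_e$. Here I would observe that $u_e\in eSd$ gives $u_e=e\,u_e\,d$, and then use that the idempotent $d$ is central (item~(4) of the characterisation theorem) to commute $d$ past $u_e$: one gets $u_e=e\,u_e\,d=e\,d\,u_e=g\,u_e$, so $u_g=g\,u_e=u_e$. On the other side, $a=u_d\in dSd$ gives $d\,a=a$, whence $u_g=g\,a=(ed)a=e(da)=ea$. Equating the two values of $u_g$ then delivers $u_e=ea$, as required.

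Assembling the pieces, for every $Se\in v\los$ we obtain $\gamma(Se)=\rho(e,ea,d)=\rho^a(Se)$ (using $f=d$ from the first step), so $\gamma=\rho^a$ is principal. The only points to watch are orienting the inclusions correctly, so that condition~(2) is read for $Sg\subseteq Se$ and $Sg\subseteq Sd$ rather than the reverse, and keeping track of which idempotent is being commuted past $u_e$; once the meet $g=ed$ and the centrality of idempotents are in play, every remaining manipulation is routine in the semilattice-of-groups setting.
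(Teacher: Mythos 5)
Your argument is correct and follows essentially the same route as the paper: both proofs compute the cone's component at the meet idempotent $ed$ in two ways via the restriction condition~(2), then use Proposition~\ref{proeq} together with centrality of idempotents to deduce $u_e=ea$ (the paper's $v=fu$ in its notation). The one point where you are more careful than the paper is in invoking condition~(3) and the fact that isomorphic objects of $\los$ are identical to conclude that $a$ lies in the group $\gh$-class of $d$, so that $\rho^a$ genuinely has apex $Sd$; the paper leaves this step implicit.
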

\begin{proof}
	Suppose $\gamma$ is a normal cone in $\los$  with vertex $Se$ so that $\gamma(Se)=\rho(e,u,e)$ for some $u\in S$. 
	Then for any $Sf\in v\los$, we shall show that $\gamma(Sf)=\rho(f,fu,e)=\rho^u$ for some $u\in S$. To this end, first observe that since idempotents commute in $S$, we have $Sef=Sfe\subseteq Se$. So, by (2) of Definition \ref{def:normalcone}, we have 			$\gamma(Sef)=\rho(ef,ef,e)\gamma(Se)$. Then,
	\begin{align*}
		\gamma(Sef)&=\rho(ef,ef,e)\gamma(Se)& \text{ since }Sef\subseteq Se\\
		&=\rho(ef,ef,e)\rho(e,u,e)\\
		&=\rho(ef,efu,e)\\
		&=\rho(ef,fu,e)&\text{ since }u\in eSe\text{ and }ef=fe.
	\end{align*}
Now for any $Sf\in v\los$, let $\gamma(Sf)=\rho(f,v,e)$ for some $v\in fSe$. Then since $Sef\subseteq Sf$ also, we have
\begin{align*}
	\gamma(Sef)&=\rho(ef,ef,f)\gamma(Sf)\\
	&=\rho(ef,ef,f)\rho(f,v,e)\\
	&=\rho(ef,efv,e)\\
	&=\rho(ef,ev,e)&\text{since }fv=v\\
	&=\rho(ef,ve,e)&\text{since idempotents are central in }S\\
	&=\rho(ef,v,e)&\text{ since }v\in fSe.
\end{align*}
Now, from the discussion above, we see that $\rho(ef,fu,e)=\rho(ef,v,e)$. Then, using Proposition \ref{proeq}, this implies that $v=fu$. Hence for all $Sf\in v\los$, we have  $\gamma(Sf)=\rho(f,fu,e)$ and so $\gamma=\rho^u$.
\end{proof}

In general, for an arbitrary regular semigroup $S$, two distinct principal cones 	$\rho^a$ and $\rho^b$ may be equal in $\los$ even when $a\neq b$. But when $S$ is Clifford, we proceed to show that it is not the case.

\begin{prop}\label{id:cone}
Let $S$ be a Clifford semigroup. Given two principal cones $\rho^a$ and $\rho^b$, we have $\rho^a=\rho^b$ if and only if $a=b$.
\end{prop}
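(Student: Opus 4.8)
The forward implication is immediate: if $a=b$ then $\rho^a$ and $\rho^b$ are literally the same function. All the content lies in the converse, and my plan is to exploit the rigidity of morphism equality already recorded in Proposition~\ref{proeq}, namely that $\rho(e,u,f)=\rho(g,v,h)$ forces $e=g$, $u=v$ and $f=h$. The strategy is to evaluate the two cones at a single, carefully chosen object so that the equality of functions collapses to one rigid morphism equation.

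First I would pin down the apexes. By definition the apex of $\rho^a$ is $Sf$ with $f\gl a$, and that of $\rho^b$ is $Sf'$ with $f'\gl b$. Since equal cones must have equal apexes (the apex is nothing but the common codomain of all the component morphisms), the hypothesis $\rho^a=\rho^b$ gives $Sf=Sf'$. Thus $f$ and $f'$ are $\gl$-related idempotents; because $\gl$ and $\gh$ coincide in a Clifford semigroup and each $\gh$-class contains at most one idempotent, I conclude $f=f'$.

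Next I would evaluate both cones at this common apex $Sf$. Since $a\gl f$ and $\gl=\gh$ in $S$, we have $a\gh f$ with $f$ idempotent, so $a$ lies in the maximal subgroup (the group $\gh$-class) whose identity is $f$; hence $fa=a$, and therefore $\rho^a(Sf)=\rho(f,fa,f)=\rho(f,a,f)$. By the identical argument applied to $b$ (using $f'=f$), we get $\rho^b(Sf)=\rho(f,b,f)$. The hypothesis $\rho^a=\rho^b$ now forces $\rho(f,a,f)=\rho(f,b,f)$, and a single application of Proposition~\ref{proeq} yields $a=b$.

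I do not anticipate a genuine obstacle here; the only points requiring care are the identification $f=f'$ and the observation $fa=a$, both of which rest squarely on the Clifford characterisations $\gl=\gh=\gd$ together with the uniqueness of idempotents in $\gh$-classes. What makes the argument clean is precisely that testing the equality of the two cones at their shared apex reduces an equality of functions on all of $v\los$ to the single rigid equation $\rho(f,a,f)=\rho(f,b,f)$, where the already-established rigidity of morphisms does the remaining work.
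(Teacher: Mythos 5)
Your proposal is correct and follows essentially the same route as the paper: both identify the common apex $Sf$ (the paper phrases this as $Sa=Sb$, hence $a\gh b$ with unique idempotent $e$ in that $\gh$-class), evaluate both cones there to get $\rho(f,a,f)=\rho(f,b,f)$, and conclude via Proposition~\ref{proeq}. Your treatment is slightly more explicit about why the two apex idempotents coincide and why $fa=a$, but the argument is the same.
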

\begin{proof}
Clearly when $a=b$, then $\rho^a=\rho^b$. Conversely suppose $\rho^a=\rho^b$. Then their vertices coincide and so, we have $Sa=Sb$, then since $S$ is Clifford and Green's relations coincide, we have $a\gh b$. Now let $e$ be the idempotent in $H_a=H_b$, the Green's $\gh$ class containing $a$ and $b$. Then $\rho^a(Se)=\rho(e,ea,e)=\rho(e,a,e)$. Similarly we get $\rho^b(Se)=\rho(e,b,e)$. Since the cones are equal, the corresponding morphism components at each vertex coincide. Hence using Proposition \ref{proeq}, we have $a=b$. 
\end{proof}

Recall from \cite[Section III.1]{nambooripad1994theory} that the set of all normal cones in a normal category forms a regular semigroup, under a natural binary operation. So, in particular, given the normal category $\los$, the set $\tls$ of all normal cones in the category $\los$ is a regular semigroup. Now, we proceed to characterise this semigroup when $S$ is Clifford.

\begin{thm}\label{los-to-s}
Let $S$ be a Clifford semigroup. Then the semigroup	$\tls$ of all normal cones in $\los$ is isomorphic to the semigroup $S$.
\end{thm}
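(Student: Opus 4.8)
The plan is to exhibit the assignment $a\mapsto\rho^a$ as the desired isomorphism. Write $\phi\colon S\to\tls$ for the map sending $a$ to its principal cone $\rho^a$. This is well-defined, since each $\rho^a$ is a normal cone by~\cite[Lemma III.15]{nambooripad1994theory}. Bijectivity is then immediate from the two propositions just established: $\phi$ is surjective because every normal cone in $\los$ is principal (Proposition~\ref{pric:cone}), and it is injective because $\rho^a=\rho^b$ forces $a=b$ (Proposition~\ref{id:cone}). Thus the only substantive thing left to check is that $\phi$ is a semigroup homomorphism, i.e. that $\rho^a\cdot\rho^b=\rho^{ab}$ under the natural product on $\tls$.

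To verify this I would unwind the binary operation on normal cones from~\cite[Section III.1]{nambooripad1994theory}: for cones $\gamma,\sigma$ with apexes $c_\gamma,c_\sigma$, the product is $\gamma\cdot\sigma=\gamma\ast(\sigma(c_\gamma))^\circ$, where $(\sigma(c_\gamma))^\circ$ is the epimorphic component of the morphism $\sigma(c_\gamma)\colon c_\gamma\to c_\sigma$, and $\ast$ composes this epimorphism onto every component of $\gamma$; concretely $(\gamma\cdot\sigma)(Se)=\gamma(Se)\,(\sigma(c_\gamma))^\circ$. For a Clifford semigroup I would write $a^0=aa^{-1}=a^{-1}a$ for the unique idempotent of the $\gh$-class $H_a$, so that the apex of $\rho^a$ is $Sa^0$ and $\rho^a(Se)=\rho(e,ea,a^0)$. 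Then I would compute $\rho^b(Sa^0)=\rho(a^0,a^0b,b^0)$ and, using centrality of idempotents and the coincidence of Green's relations, identify its epimorphic component as $\rho(a^0,a^0b,(ab)^0)$, where $(ab)^0=a^0b^0$ is the idempotent $\gl$-related to $a^0b$. Composing, and using $aa^0=a$, would then give
\[
(\rho^a\cdot\rho^b)(Se)=\rho(e,ea,a^0)\,\rho(a^0,a^0b,(ab)^0)=\rho(e,eab,(ab)^0)=\rho^{ab}(Se)
\]
for every object $Se$, so that $\rho^a\cdot\rho^b=\rho^{ab}$, as required.

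The routine bookkeeping—checking each displayed equality—is short, so the genuine obstacle is conceptual: one must correctly recall the definition of the product on $\tls$ and, in particular, correctly locate the epimorphic factorisation of $\sigma(c_\gamma)$ in the Clifford setting. Here the Clifford hypotheses do the real work: centrality of idempotents lets me commute idempotents past group elements, while the coincidence $\gl=\gr=\gh$ together with the uniqueness of the idempotent in each $\gh$-class identifies the codomain $(ab)^0$ of the epimorphic component. Once the identity $aa^0=a$ collapses the composite to $\rho^{ab}$, the homomorphism property follows, and combining it with the bijectivity established above shows that $\phi$ is an isomorphism, completing the proof.
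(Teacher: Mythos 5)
Your proof is correct and takes essentially the same route as the paper: both use the map $a\mapsto\rho^a$, obtaining surjectivity from Proposition~\ref{pric:cone} and injectivity from Proposition~\ref{id:cone}. The only difference is that you verify the homomorphism identity $\rho^a\cdot\rho^b=\rho^{ab}$ by an explicit (and correct) computation with the cone product and the epimorphic component, whereas the paper simply cites \cite[Section III.3.2]{nambooripad1994theory} for the fact that $a\mapsto\rho^a$ is a homomorphism.
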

\begin{proof}
Recall from \cite[Section III.3.2]{nambooripad1994theory} that the map $\bar{\rho}\colon a\mapsto \rho^a$ from a regular semigroup $S$ to the semigroup $\tls$ is a homomorphism. Now, when $S$ is Clifford, by \ref{pric:cone}, we have seen that every normal cone in $\tls$ is principal and hence the map $\bar{\rho}$ is surjective. Also, by  Proposition \ref{id:cone}, we see that the map $\bar{\rho}$ is surjective.   Hence the map $\bar{\rho}$ is an isomorphism from the Clifford semigroup $S$ to the semigroup $\tls$.  
\end{proof}

The above theorem characterises the semigroup of $\tls$ of all normal cones in $\los$; this naturally leads us to the complete description of the cross-connection structure of the semigroup $S$ as follows.

Recall from \cite[Section III.4]{nambooripad1994theory} that given a normal category, it has an associated dual category whose objects are certain set-valued functors and morphisms are natural transformations. Now we proceed to characterise the normal dual $\nls$ of the normal category $\los$ of principal left ideals of a Clifford semigroup $S$.

\begin{thm}\label{thmnls}
Let $S$ be a Clifford semigroup. Then the normal dual $\nls$ of the normal category $\los$ of principal left ideals in $S$ is isomorphic to the normal category $\mathbb{R}(S)$ of principal right ideals in $S$.
\end{thm}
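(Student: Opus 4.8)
The plan is to exhibit an explicit isomorphism of normal categories $\Gamma\colon \ros \to \nls$ and to verify that it is a bijection on objects and fully faithful. Recall that an object of $\nls$ is an $H$-functor $H(\gamma;-)\colon \los\to \mathbf{Set}$ attached to a normal cone $\gamma$ with some apex $Se$, where $H(\gamma;Sg)=\{\gamma\ast\rho(e,v,g):v\in eSg\}$ (here $\gamma\ast\rho(e,v,g)$ denotes the cone $Sk\mapsto\gamma(Sk)\rho(e,v,g)$), the functor acting on a morphism $\rho(g,s,h)$ by $\gamma\ast\rho(e,v,g)\mapsto\gamma\ast\rho(e,vs,h)$; the morphisms of $\nls$ are the natural transformations between such functors. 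Since every normal cone in $\los$ is principal by Proposition~\ref{pric:cone}, every object of $\nls$ has the form $H(\rho^a;-)$ for some $a\in S$. First I would describe these functors concretely: writing $e$ for the unique idempotent with $a\gh e$, a direct computation gives $\rho^a\ast\rho(e,v,g)=[Sk\mapsto\rho(k,k\,av,g)]$, and since $a$ lies in the group $\gh$-class $H_e$, left multiplication by $a$ permutes $eSg$, so $av$ ranges over all of $eSg$ as $v$ does. Hence $H(\rho^a;Sg)$ is exactly the set of cones $[Sk\mapsto\rho(k,kw,g)]$ with $w\in eSg$, the functor sending the parameter $w$ to $ws$.

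The key observation is that this description depends on $a$ only through the idempotent $e$: both the parameterising set $eSg$ and the action $w\mapsto ws$ are independent of the choice of $a\in H_e$. Consequently $H(\rho^a;-)=H(\rho^b;-)$ whenever $a\gh b$, while distinct idempotents $e\neq e'$ give distinct functors, as one sees by comparing the parameterising sets at the apices $Se$ and $Se'$. This identifies the objects of $\nls$ bijectively with $E(S)$, matching $v\ros$, and I would set $\Gamma(eS):=H(\rho^e;-)$.

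On morphisms I would send $\lambda(e,u,f)\colon eS\to fS$ (with $u\in fSe$, using the left--right dual of the morphism description for $\los$) to the natural transformation $\Gamma(\lambda(e,u,f))\colon H(\rho^e;-)\Rightarrow H(\rho^f;-)$ whose component at $Sg$ carries the parameter $w\in eSg$ to $uw\in fSg$. Naturality is immediate from associativity, since $u(ws)=(uw)s$, and functoriality of $\Gamma$ follows from the composition rule $\lambda(e,u,f)\lambda(f,w,h)=\lambda(e,wu,h)$ in $\ros$. Injectivity on hom-sets is clear, since reading off the component at $Se$ on the parameter $e$ recovers $u=\Gamma(\lambda(e,u,f))_{Se}(e)$, so by the dual of Proposition~\ref{proeq} distinct morphisms have distinct images.

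The crux is surjectivity on hom-sets. Given an arbitrary natural transformation $\eta\colon H(\rho^e;-)\Rightarrow H(\rho^f;-)$, I would set $u:=\eta_{Se}(e)\in fSe$, noting that the parameter $e$ at $Se$ corresponds to the cone $\rho^e$ itself. For any $Sg$ and $w\in eSg$, the parameter $w$ is the image of $e$ under $H(\rho^e;\rho(e,w,g))$, so naturality of $\eta$ along $\rho(e,w,g)\colon Se\to Sg$ forces $\eta_{Sg}(w)=uw=\Gamma(\lambda(e,u,f))_{Sg}(w)$, whence $\eta=\Gamma(\lambda(e,u,f))$ and $\Gamma$ is full. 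I expect the main obstacle to be precisely this classification of all natural transformations between the $H$-functors, together with setting up their explicit parameterisation cleanly; once that is in place, it remains only to check routinely that $\Gamma$ preserves inclusions and the normal-category structure, so that $\Gamma$ is an isomorphism of normal categories and $\nls\cong\ros$.
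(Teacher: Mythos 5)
Your proposal is correct in substance, but it takes a genuinely different route from the paper. The paper's proof is a two-line citation argument: by Nambooripad's general result, the normal dual $\nls$ is isomorphic to $\mathbb{R}(\tls)$ for any regular semigroup, and since Theorem~\ref{los-to-s} gives $\tls\cong S$ in the Clifford case, the conclusion $\nls\cong\ros$ is immediate. You instead build the isomorphism $\Gamma\colon\ros\to\nls$ by hand: you identify the objects of $\nls$ with idempotents via the parameterisation $H(\rho^e;Sg)\leftrightarrow eSg$, and you classify all natural transformations between the $H$-functors by a Yoneda-style argument (evaluating at the apex $Se$ on the parameter $e$ and propagating by naturality). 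In effect you are re-proving the relevant special case of the general theorem $\nls\cong\mathbb{R}(\tls)$, with Proposition~\ref{pric:cone} and the group structure of the $\gh$-classes doing the work that Theorem~\ref{los-to-s} does in the paper. What your approach buys is an explicit, concrete description of the cross-connection functor $\Gamma$ as ``left translation,'' which the paper's abstract argument does not provide; what it costs is length and some care with the formalism of the normal dual. On that last point, one detail to tighten: in Nambooripad's definition $\gamma*f$ is formed using the epimorphic component of $f$, so the elements of $H(\rho^e;Sg)$ are the principal cones $\rho^w$ with apex $Sw\subseteq Sg$ (for $w\in eSg$), not cones with codomain $Sg$ as your phrasing suggests; this does not affect the parameterisation by $eSg$ or any subsequent step, but the description of the objects should be stated accordingly. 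Also, your quick argument that distinct idempotents give distinct functors is fine, but it is worth spelling out that $H(\rho^e;-)=H(\rho^{e'};-)$ forces $e'e=e$ and $ee'=e'$, whence $e=e'$ by commutativity of idempotents.
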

\begin{proof}
It is known that \cite[Theorem III.25]{nambooripad1994theory} the normal dual $\nls$ of the normal category $\los$ is isomorphic to  the normal category $\mathbb{R}(\tls)$ of principal right ideals of the regular semigroup $\tls$.  When $S$ is a Clifford semigroup, by Theorem \ref{los-to-s}, we see that the semigroup $\tls$  is isomorphic to $S$ and hence $\nls$ is isomorphic to $\mathbb{R}(S)$ as normal categories.  
\end{proof}
Dually, we can easily prove the following results:
\begin{thm}\label{thmnrs}
Let $S$ be a Clifford semigroup. The semigroup $\trs$ of all normal cones in the category $\ros$ of all principal right ideals in $S$ is anti-isomorphic to the semigroup $S$. The normal dual $\nrs$ of the normal category $\ros$ is isomorphic to the normal category $\los$.
\end{thm}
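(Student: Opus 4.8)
The plan is to mirror the development carried out for $\los$, establishing the right-handed analogues of Propositions \ref{proeq}, \ref{pric:cone} and \ref{id:cone}, and then to feed these into the right-handed form of \cite[Theorem III.25]{nambooripad1994theory}. The one genuinely new point to watch is variance: morphisms in $\ros$ are partial \emph{left} translations, which compose contravariantly, so the resulting map from $S$ to $\trs$ will turn out to be an anti-homomorphism rather than a homomorphism.

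For the first assertion, recall that the objects of $\ros$ are the principal right ideals $eS$ with $e\in E(S)$, and a typical morphism is a partial left translation $\lambda(e,u,f)$ with $u\in fSe$, sending $x\in eS$ to $ux\in fS$. Running the argument of Proposition \ref{proeq} on the opposite side (again using that $\gr$ and $\gh$ coincide in a Clifford semigroup and that an $\gh$-class contains at most one idempotent) shows that $\lambda(e,u,f)=\lambda(g,v,h)$ forces $e=g$, $u=v$ and $f=h$. With this rigidity in hand, the computation of Proposition \ref{pric:cone} transposes verbatim, using centrality of idempotents, to prove that every normal cone in $\ros$ is a principal cone $\lambda^a$, and the argument of Proposition \ref{id:cone} gives $\lambda^a=\lambda^b$ precisely when $a=b$. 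Thus the assignment $\bar{\lambda}\colon a\mapsto\lambda^a$ is a bijection onto $\trs$; because left translations compose in the reverse order, $\bar{\lambda}$ is an anti-homomorphism, and hence $\trs$ is anti-isomorphic to $S$.

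For the second assertion I would invoke the right-handed form of \cite[Theorem III.25]{nambooripad1994theory}, namely that $\nrs$ is isomorphic to $\mathbb{L}(\trs)$, the normal category of principal left ideals of $\trs$. The first assertion supplies an anti-isomorphism $\trs\to S$; since $S$ is a Clifford, hence inverse, semigroup, the inversion map $a\mapsto a^{-1}$ is an anti-automorphism of $S$, and composing it with this anti-isomorphism produces an honest isomorphism $\trs\cong S$. As the principal-left-ideal category is functorial in semigroup isomorphisms, we obtain $\mathbb{L}(\trs)\cong\mathbb{L}(S)=\los$, whence $\nrs\cong\los$.

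The step I expect to require the most care is exactly this last reconciliation of variances. A naive dualization of Theorem \ref{thmnls} would suggest $\nrs\cong\ros$, and it is only the self-duality of the inverse semigroup $S$, furnished by the anti-automorphism $a\mapsto a^{-1}$, that upgrades the anti-isomorphism $\trs\cong S$ to a genuine isomorphism and thereby delivers $\los$ in place of $\ros$. Verifying that $\bar{\lambda}$ is truly an anti-homomorphism, rather than a homomorphism, is the other place where the bookkeeping must be done honestly, since that is precisely what makes the inversion argument necessary.
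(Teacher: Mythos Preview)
Your proposal is correct and in fact supplies considerably more detail than the paper itself, which offers no explicit proof beyond the single line ``Dually, we can easily prove the following results.'' Your mirrored versions of Propositions \ref{proeq}, \ref{pric:cone} and \ref{id:cone} are the right ingredients, and your identification of the variance issue---that $\bar{\lambda}$ is an anti-homomorphism, so that a priori one only obtains $\trs\cong S^{\mathrm{op}}$ and hence $\mathbb{L}(\trs)\cong\ros$ rather than $\los$---is a genuine point that the paper's terse ``dually'' glosses over. Your resolution via the inversion anti-automorphism of the inverse semigroup $S$ is exactly what is needed to land on $\los$ as stated; alternatively, one could simply note that in a Clifford semigroup the coincidence of $\gl$ and $\gr$ already forces $\los\cong\ros$, so the distinction evaporates. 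Either way, your argument is sound and more careful than the paper's.
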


Recall from \cite[Theorem IV.1]{nambooripad1994theory} that the cross-connection of a regular semigroup $S$ is defined as a quadruplet $(\los,\ros,\Gamma,\Delta)$ such that 
$\Gamma\colon\ros \to \nls$ and $\Delta\colon\los \to \nrs$ are functors satisfying certain properties. 
Now, using Theorems \ref{thmnls} and \ref{thmnrs}, we see that both the functors $\Gamma$ and $\Delta$ are in fact isomorphisms. And hence the cross-connection structure degenerates to isomorphisms of the associated normal categories in a Clifford semigroup $S$.

\section{Cross-connections of a semilattice}

Clearly, a semilattice is a Clifford semigroup. So, we specialise our discussion to a semilattice using the results in the previous section. The following theorem follows from Theorem \ref{los-to-s}.

\begin{thm}
Let $S$ be a semilattice. Then the semigroup	$\tls$ of all normal cones in $\los$ is isomorphic to the semilattice $S$.
\end{thm}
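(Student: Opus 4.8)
The plan is straightforward: since a semilattice is a Clifford semigroup (every element is idempotent, and idempotents trivially commute and are central), this theorem is an immediate specialisation of Theorem~\ref{los-to-s}. So the entire content of the proof is to observe that the hypothesis of Theorem~\ref{los-to-s} is satisfied by any semilattice and then to invoke that theorem directly.

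First I would recall, as is already remarked at the start of the section, that a semilattice $S$ is a commutative band: every element is idempotent, so $E(S)=S$, and idempotents commute by commutativity. By characterisation~(4) of the Clifford semigroups theorem cited in the introduction (a semigroup is Clifford iff it is regular and its idempotents are central), a semilattice qualifies as a Clifford semigroup. Indeed each $\mathscr{H}$-class is a trivial group, so $S$ is vacuously a semilattice of (trivial) groups.

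Having established that $S$ is Clifford, I would then apply Theorem~\ref{los-to-s} verbatim: the map $\bar{\rho}\colon a\mapsto\rho^a$ from $S$ to $\tls$ is the required isomorphism. No new computation is needed, because Theorem~\ref{los-to-s} already supplies surjectivity (via Proposition~\ref{pric:cone}, every normal cone is principal) and injectivity (via Proposition~\ref{id:cone}, $\rho^a=\rho^b$ forces $a=b$), together with the fact that $\bar{\rho}$ is a homomorphism.

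I do not anticipate any genuine obstacle here; the only thing to be careful about is the logical dependency, namely that Theorem~\ref{los-to-s} and the propositions it rests on were proved for arbitrary Clifford semigroups and therefore apply without modification. If one wished to say something sharper in the semilattice case, one could note that the structure degenerates even further---since each group factor is trivial, every principal cone $\rho^a$ is determined by a single idempotent apex and the morphism components $\rho(f,f a,e)$ collapse accordingly---but this refinement is not required to prove the stated isomorphism and I would omit it from the proof itself.
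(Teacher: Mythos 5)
Your proposal is correct and matches the paper exactly: the paper likewise gives no separate argument, simply noting that a semilattice is a Clifford semigroup and that the result therefore follows from Theorem~\ref{los-to-s}. Your justification that a semilattice satisfies the Clifford hypothesis (a commutative band of trivial groups, with central idempotents) is the one implicit in the paper's remark.
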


Theorems \ref{thmnls} and \ref{thmnrs} when applied to a semilattice can be unified as follows:
\begin{thm}\label{thmsemi}
Let $S$ be a semilattice. Then the normal dual $\nls$ of the normal category $\los$ of principal left ideals in $S$ is isomorphic to the normal category $\mathbb{R}(S)$ of principal right ideals in $S$. The normal dual $\nrs$ of the normal category $\ros$ is isomorphic to the normal category $\los$. 
\end{thm}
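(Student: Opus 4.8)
The plan is to observe that every semilattice $S$ is a Clifford semigroup, and then read off both assertions as immediate specialisations of the Clifford results already established. That $S$ is Clifford is clear: every element of a semilattice is idempotent and the multiplication is commutative, so each $\gh$-class is the trivial group on a single idempotent and, in particular, every $\gd$ class of $S$ contains a unique idempotent---which is precisely the last characterisation in the opening theorem. Hence $S$ satisfies the hypotheses of Theorems~\ref{thmnls} and~\ref{thmnrs}.

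For the first assertion I would simply apply Theorem~\ref{thmnls} to $S$, which yields directly that the normal dual $\nls$ is isomorphic to the normal category $\ros$ of principal right ideals. For the second assertion I would invoke the dual statement recorded in Theorem~\ref{thmnrs}, giving that $\nrs$ is isomorphic to $\los$. No computation is needed once these theorems are cited, so the argument is purely a matter of checking that the ambient class has specialised correctly.

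Accordingly, there is no genuine obstacle here; the only point to verify is the routine implication that the semilattice hypothesis forces the Clifford hypothesis, which the first paragraph handles. I would, however, flag one further simplification peculiar to semilattices that makes the result especially transparent: commutativity gives $Se=eS$ for every idempotent $e$, so the categories $\los$ and $\ros$ coincide. This collapses the two separate isomorphisms into the single chain $\nls\cong\los\cong\ros\cong\nrs$, and it is this extra coincidence of the left and right ideal categories, rather than any new technical content, that distinguishes the semilattice case from the general Clifford case.
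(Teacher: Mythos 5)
Your proposal is correct and matches the paper's approach exactly: the paper likewise notes that a semilattice is a Clifford semigroup and obtains Theorem~\ref{thmsemi} by directly specialising Theorems~\ref{thmnls} and~\ref{thmnrs}, offering no further argument. Your closing observation that commutativity forces $Se=eS$, so that $\los$ and $\ros$ coincide and the two isomorphisms merge into one chain, is a pleasant extra remark but does not change the substance of the proof.
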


Hence, we see that both the cross-connections functors $\Gamma$ and $\Delta$ are isomorphisms, when we have a semilattice also. So, the cross-connection structure degenerates to isomorphisms of the associated normal categories in a semilattice, too.

\bibliography{books}
\bibliographystyle{plain}

\end{document}